\documentclass{IEEEtran}

\usepackage{preamble}
\usepackage{comment}
\usepackage{setspace}
\usepackage{url}

\usepackage[
colorlinks=true,
linkcolor=blue,
citecolor=blue,
urlcolor=blue
]{hyperref}

\def\BibTeX{{\rm B\kern-.05em{\sc i\kern-.025em b}\kern-.08em
    T\kern-.1667em\lower.7ex\hbox{E}\kern-.125emX}}
\begin{document}
\title{On the Value Function of Infinite-Horizon Optimal Control of Piecewise Affine Systems}
\author{Francesco Cordiano*\thanks{*Equal contribution. This project has received funding from the European Research Council (ERC) under the European Union's Horizon 2020 research and innovation programme (Grant agreement No.\ 101018826 - ERC Advanced Grant CLariNet), and by the Rubicon Postdoctoral Fellowship (Correspondence No. 2026/ENW/02250137), funded by the Netherlands Organisation for Scientific Research (NWO).

Francesco Cordiano and Bart De Schutter are affiliated with the Delft Center for Systems and Control, Delft University of Technology, 2628 CD, Delft, The Netherlands, email: $\{$f.cordiano, b.deschutter$\}$@tudelft.nl. Kanghui He is affiliated with the Department of Engineering Science, University of Oxford, OX1 3PJ Oxford, U.K., email: kanghui.he@eng.ox.ac.uk.
}, Kanghui He*, and Bart De Schutter, \IEEEmembership{Fellow, IEEE}}

\maketitle

\begin{abstract}
In this paper, we study the structure of the value function in constrained infinite-time optimal control (CITOC) problems of piecewise affine (PWA) systems, with $\ell_1$ or $\ell_\infty$ stage cost.
Existing works, such as \cite{baotic2006constrained}, establish that the resulting value function is PWA in the state. However, existing results do not analyze whether the value function is a proper PWA function, i.e., with a finite number of affine pieces over compact sets, or whether the number of pieces can be infinite. We show that the latter case is indeed possible by means of an explicit example, which is also instrumental in establishing rigorous and easily verifiable sufficient conditions that ensure that the resulting value function is a proper PWA function.
Our theoretical findings complement well-known results, e.g., the linear-quadratic case, and serve as support for recent learning-based control schemes for PWA systems. Throughout the paper, the proposed results are illustrated by means of a numerical example.
\end{abstract}

\begin{IEEEkeywords}
Piecewise affine systems, dynamic programming, constrained optimal control.
\end{IEEEkeywords}

\section{Introduction}
\label{sec:introduction}
Over the last decades, significant interest has been devoted to the analysis and control of piecewise affine (PWA) systems \cite{bemporad1999control}, which are a special class of hybrid systems \cite{van2007introduction} where the function describing the dynamics is piecewise affine in the state and the input. Their popularity stems from the ability to model dynamical systems with hybrid dynamics (e.g., energy storage systems \cite{arnold2009modelbased}), or to approximate nonlinear systems with a user-defined level of accuracy \cite{gharavi2023efficient}. In addition, several classes of hybrid systems, including mixed-logical dynamical, max-min-plus-scaling, and linear complementarity systems, have been proved to be equivalent to PWA systems (see \cite{heemels2001equivalence} for an overview of the aforementioned models and their equivalence).

A variety of strategies have been studied for optimal control of constrained PWA systems, ranging from dynamic programming (DP) \cite{baotic2006constrained}, to model predictive control \cite{deschutter2004mpc, lazar2006stabilizing}, and learning-based methods \cite{he2024approximatea_pwa}.
In particular, recent approaches leveraging learning algorithms \cite{he2024approximatea_lin, he2024approximatea_pwa}, rely on the structure of the value function to ensure adequate quality of the approximation. For example, it is known from \cite{bemporad2002explicit} that the value function of a constrained LQR problem is piecewise quadratic, so \cite{he2024approximatea_lin} proposes piecewise quadratic neural networks to approximate the corresponding value function. Similarly, \cite{baotic2006constrained} states that the value function of a constrained optimal control problem for PWA systems with $\ell_1$ or $\ell_\infty$ stage cost has a PWA structure; therefore, \cite{he2024approximatea_pwa} employs a PWA neural network to approximate it.

Due to the importance of these results in recent learning-based frameworks, this technical note presents a detailed analysis of the value function for constrained optimal control of PWA systems with $\ell_1$ or $\ell_\infty$ stage cost. Although the value function is known to be PWA {with a finite number of pieces} for finite-time optimal control problems, we show that this is not necessarily true for the infinite-horizon case, unless additional sufficient conditions are specified. Our contributions are summarized as follows:
\begin{itemize}
    \item {We consider} constrained infinite time optimal control (CITOC) of PWA systems with $\ell_1$ or $\ell_\infty$ stage cost. We construct an example for which we can calculate the resulting value function in closed form, showing that {it has infinitely many affine pieces on a compact set.}
    \item {The proposed example unveils that this structural property of the value function is related to the controllability of the system and to the design of the cost matrices. Hence, based on these insights, we derive explicit and easily verifiable sufficient conditions such that the value function of a CITOC problem for a PWA system with an $\ell_1$ or $\ell_\infty$ stage cost has a proper PWA structure, i.e., with a finite number of affine pieces over compact sets.} 
\end{itemize}
This study complements related papers (e.g., the well-known linear-quadratic case \cite{bemporad2002explicit}) and also covers the case of linear systems  with $\ell_1$ or $\ell_\infty$ stage cost. Moreover, it serves as support for {learning-based approaches \cite{he2024approximatea_lin, he2024approximatea_pwa} that} aim to design function approximators for CITOC problems, while providing guarantees on the quality of the approximation.

The paper is organized as follows: Section \ref{sec:problem} recaps the problem setting in \cite{baotic2006constrained}, Section \ref{sec:counterexample} proposes {an explicit example in which the value function has infinitely many pieces}, in Section \ref{sec:suff} we define sufficient conditions {such that this does not occur}, and Section \ref{sec:conclusions} concludes the paper. 
Our theoretical findings are accompanied by a numerical example throughout the paper.

\section{Problem statement}\label{sec:problem}
We consider the class of discrete-time piecewise affine (PWA) systems of the following form:
\begin{align}\label{eq:pwa_sys}
\begin{split}
    x_{t+1} &= \fpwa(x_t, u_t)
    \\& = A_i x_t + B_i u_t + f_i, \quad \text{if }
    [x_t^\top\ u_t^\top]^\top
    \in \Dcal_i,
\end{split}
\end{align}
where, {for all $t\in\Z_{\geq0}$},  $x_t{\in\R^n}$ is the state, $u_t{\in\R^m}$ is the control input, and the domain $\Dcal = \cup_{i=1}^{n_d} \Dcal_i$ is a {nonempty subset of} $\R^{n+m}$. In particular, {$\{\Dcal_i\}_{i=1}^{n_d}$} constitutes a {non-overlapping} polyhedral partition of $\Dcal$, {i.e., for all $i\in\{1,...,n_d\}$, we have
\begin{align*}
\Dcal_i = \{[x^\top \ u^\top] \in\R^{n+m}:\  & D_i^x x + D_i^u u \leq D^0_i, 
\\& \bar D_i^x x + \bar D_i^u u < \bar D^0_i\},
\end{align*}
with} $\Dcal_i\neq\emptyset, \forall i\in\{1,...,n_d\}$, and {$\Dcal_i\cap \Dcal_j = \emptyset, \forall i\neq j$}. {For our contributions, we do not necessarily require that the dynamics in \eqref{eq:pwa_sys} are continuous. Continuity of the system dynamics is generally necessary for the existence of a minimizer policy over $\Dcal$ as in \cite{baotic2006constrained}, but it is not strictly required for the analysis proposed in this paper.}

Let us consider the following cost function
$$J_\infty(x_0, U_\infty) := \lim_{T\rightarrow\infty} \sum_{t=0}^T g(x_t, u_t),$$
where $U_\infty$ denotes the input sequence $\{u_t\}_{t=0}^\infty$, and $g(x,u) := \|Qx\|_p + \|Ru\|_p$, with $p\in\{1,\infty\}$, {and $Q\in\R^{n_Q\times n}$, $R\in\R^{m_R\times m}$}.
We then consider the following constrained infinite-time optimal control problem (CITOC) 
\begin{align}
    J^\star_\infty(x_0) = \min_{U_\infty} 
    & \ J_\infty(x_0, U_\infty) \label{citoc:cost}
    \\ \text{s.t.} & \ x_{t+1} = \fpwa(x_t, u_t), \ {\forall t\in\Z_{\geq0} } \label{citoc:dyn}
    \\ & \ [x_t^\top \ u_t^\top]^\top\in\Dcal, \ \forall t\in\Z_{\geq0} \label{citoc:constr},
\end{align}
with minimizer $U^\star_\infty := \{{u^\star_t}\}_{t=0}^\infty$.
For CITOC \eqref{citoc:cost}--\eqref{citoc:constr} to be well-posed, \cite{baotic2006constrained} considers the following assumptions:
\begin{assumption}\label{assum:origin}
    The system \eqref{eq:pwa_sys} is stabilizable. In addition, the origin in the extended state-input space is an equilibrium point of the PWA system \eqref{eq:pwa_sys}, i.e., $0_{n+m}\in\Dcal$ and $0_n = \fpwa(0_n, 0_m)$, where $0_n = [0 \ ... \ 0]^\top\in\R^n$.
\end{assumption}
\begin{assumption}\label{assum:Jfinite}
    The CITOC problem \eqref{citoc:cost}--\eqref{citoc:constr} is well defined, i.e., the minimum is achieved for some feasible input sequence $U^\star_\infty$, and $J^\star_\infty < \infty$ for any feasible state $x$ on {a set $\Xcal_\infty:=\{x:J^\star_\infty(x)<\infty\}$}. In addition, the matrix $Q$ has full column rank, i.e., $Qx=0$ if and only if $x=0_{n}$.
\end{assumption}

The authors of \cite{baotic2006constrained} propose to solve the CITOC \eqref{citoc:cost}--\eqref{citoc:constr} via the following {dynamic programming (DP)} iterations. In particular, let us consider the following initialization:
\begin{align}\label{dp:iter_0}
    J_0(x) = 0, \, \Xcal_0 = \left\{x\in\R^n \mid \exists u\in\R^{m}: \begin{bmatrix}
        x\\u
    \end{bmatrix}\in\Dcal \right\}.
\end{align}
Then, the DP updates consist of
\begin{align}
J_{k}(x) &= \begin{aligned}[t]\label{dp:iter_1}
        \min_{u} & \ g(x, u) + J_{k-1}(\fpwa(x, u))
        \\ \text{s.t.} & \ \fpwa(x, u) \in \Xcal_{k-1}
        \\& (x,u)\in\Dcal,
    \end{aligned}
    \\ \Xcal_k & = \begin{aligned}[t]\label{dp:iter_2}
        {\Big\{ x\in\R^n \mid \exists u\in\R^m :\; } & { \begin{bmatrix}
    x\\u
    \end{bmatrix} \in \Dcal,}  
    \\ & {\fpwa(x,u) \in \Xcal_{k-1} \Big\}} ,
    \end{aligned} 
\end{align}
and we denote by $\mu_k$ the policy resulting from the minimization problem in \eqref{dp:iter_1}, with $\mu_0=0$.
Under Assumptions \ref{assum:origin} and \ref{assum:Jfinite}, in \cite{baotic2006constrained} it is shown that the solution of the CITOC \eqref{citoc:cost}--\eqref{citoc:constr} and the solution of the DP iterations \eqref{dp:iter_0}--\eqref{dp:iter_2} coincide, and that the closed-loop system \eqref{eq:pwa_sys} is asymptotically stable under the control sequence ${U^\star_\infty}$, i.e., $\lim_{t\rightarrow\infty} x_t = 0$.

In \cite{baotic2006constrained}, Theorem IV.9 states that, under Assumption \ref{assum:Jfinite}, the solution of the CITOC \eqref{citoc:cost}--\eqref{citoc:constr} is a PWA function, i.e.:
\begin{align*}
    J^\star_\infty(x) = \Phi_{\infty,i} x + \Gamma_{\infty, i} \quad \text{if } x\in\Pcal_{\infty,i},
\end{align*}
and the optimal state feedback control law is also PWA, i.e.:
\begin{align*}
    \mu_{\infty}^\star(x) = F_{\infty,i}x + G_{\infty,i} \quad \text{if } x\in\Pcal_{\infty,i},
\end{align*}
where $\{\Pcal_{\infty,i}\}_{i=1}^{N_\text{pwa}}$ is a polyhedral partition of the set $\Xcal_\infty$.
However, in this paper, we demonstrate that {$N_\text{pwa}=\infty$ can actually occur, and additional sufficient conditions are needed for $J^\star_\infty$ to have a finite number of pieces over a compact set.} First, we give the definition of a PWA function that is commonly accepted in the literature \cite{huchette2023nonconvex, gorokhovik1994piecewise}:
\begin{definition}\label{def:pwa}
    {Let $\Scal\subset\R^{n_1}$ be a polyhedral set, and consider $f:\Scal\rightarrow\R^{n_2}$, for some $n_1, n_2\in\Z_{>0}$.} The function $f$ is PWA if: {\emph{i)} $f(x) = E_i x + d_i$} if $x\in\Scal_i$, {for some $E_i\in\R^{n_2\times n_1}$ and $d_i\in\R^{n_2}$}, where $\{\Scal_i\}_{i=1}^{N_\text{pwa}}, {N_\text{pwa}\in\Z_{>0}}$, is a polyhedral partition of $\Scal$; {\emph{ii)}} ${N_\text{pwa}}<\infty$.
\end{definition}

We emphasize that, according to Definition \ref{def:pwa}, a PWA function needs a \emph{finite} number of affine pieces over a compact set. 
{It is well-known that,
in the DP iterations \eqref{dp:iter_0}--\eqref{dp:iter_2}, the number of pieces of $J_k$ is finite for any finite $k$ \cite{baotic2006constrained}, and it can increase rapidly with the iterations \cite{borrelli2003constrained}. However, the set of PWA functions is not closed, i.e., it is possible to find sequences of PWA functions
that converge to a function that is not PWA according to Definition \ref{def:pwa}. This means that $J^\star_\infty$ may not be PWA. As an example, consider the function $G$ defined by $G(x)=x^2, \forall x\in[0, 1]$, which is a smooth function. Consider also $G_z$, i.e., its PWA approximation with $z\in\Z_{>0}$ pieces, obtained by interpolating the values $G(\bar x)$ for each $\bar x \in \{0, \frac{1}{z}, \frac{2}{z}, ..., 1\}$. For $x\in[\frac{i}{z}, \frac{i+1}{z}]$, with $i\in\{0,...,z-1\}$, the analytic expression for $G_z$ is $G_z(x) = \frac{2i+1}{z}x - \frac{i(i+1)}{z^2}$. Therefore, for $x\in[\frac{i}{z}, \frac{i+1}{z}]$, we have
\begin{align*}
    |G_z(x) - x^2| 
    & = \left|x-\frac{i}{z}\right|\left|\frac{i+1}{z}-x \right| 
    \\& \leq \frac{1}{z}\cdot \frac{1}{z} \underset{z\to\infty}{\longrightarrow} 0.
\end{align*}
Hence, $G_z$ converges uniformly to $G(x)=x^2$, which is not a PWA function according to Definition 1.
}

{More generally}, two cases may occur: $i$) the resulting function is smooth, or $ii$) the resulting function has infinitely many affine pieces in a compact set, while still being non-smooth over that set. In Section \ref{sec:counterexample}, we show, by means of a counterexample, that case $ii)$ can actually occur, even for a very simple class of PWA systems. Moreover, in Section \ref{sec:suff} we state which additional conditions are needed {for $J^\star_\infty$ to be PWA in the standard sense.}

\section{Explicit example}\label{sec:counterexample}
We consider a class of 2-dimensional linear systems such as:
\begin{align}\label{eq:lin_sys}
    A = \begin{bmatrix}
        a_1 & a_2 \\ 0 & a_1
    \end{bmatrix}
    \quad B = \begin{bmatrix}
        1 & 0\\ 0 & 1
    \end{bmatrix}
\end{align}
with $a_1, a_2\in\R$, $a_1 \in (-1,1), a_1\neq 0$. In principle, a counterexample can be found for a more general class of systems. However, the scope of this section is to show that the result of Theorem IV.9 in \cite{baotic2006constrained} {allows an infinite number of pieces} even for simple systems that, like the ones in \eqref{eq:lin_sys}, enjoy desirable properties, such as asymptotic stability and controllability.

The proposed {system class} is sufficient {for this purpose} for two reasons: 1) The trivial reason is that linear systems are a special class of PWA systems; 2) If the origin is in the interior of a region of the partition of the PWA dynamics, then the system dynamics will be linear in a sufficiently small set around the origin, since the $A$ matrix is Schur-stable.

For this {example}, we define the following cost function:
\begin{align}\label{eq:count_cost}
    g(x,u) = |x_1| + |x_2| + R_1|u_1| + R_2|u_2|,
\end{align}
where $R_1, R_2\in\R_{>0}$ will be defined later.
We want to solve the infinite-horizon problem {with constraint $x_t\in\Xcal, \forall t\in\Z_{\geq0}$}, where $\Xcal$ is a compact set containing the origin {($\Dcal$ in the formulation \eqref{citoc:cost}--\eqref{citoc:constr} is simply replaced by $\Xcal$ in this example)}. Thus, {for a given $x_0\in\Xcal$, we consider}:
\begin{align}\label{citoc_lin}
\begin{split}
    J^\star_{\infty}(x_0) = \min_{U_\infty} & \ \sum_{t=0}^\infty g(x_t, u_t)
    \\ \text{s.t.}& \ x_{t+1} = Ax_t + Bu_t, \ {\forall t\in\Z_{\geq0}}
    \\ & \ {x_t\in\Xcal, \ \forall t\in\Z_{\geq0}},
\end{split}
\end{align}
with $A, B$ chosen as in \eqref{eq:lin_sys}. Since \eqref{citoc_lin} is of the form \eqref{citoc:cost}--\eqref{citoc:constr}, 
we can solve \eqref{citoc_lin} using the DP iterations \eqref{dp:iter_0}--\eqref{dp:iter_2}.

In the following, we provide an expression for $J^\star_\infty$ in \eqref{citoc_lin} in closed form. First, define the coefficients $c_k, d_k, r_{1,k}, r_{2,k} \in \R$, for $k\in\Z_{\geq0}$, with
\begin{align}\label{eq:coeff_init}
    c_0 = 1, \quad d_0 = 0, \quad r_{1,0} = 1, \quad r_{2,0}=1,
\end{align}
with the update rule
\begin{align}\label{eq:coeff_updates}
    \begin{cases}
        c_{k+1} = a_1 c_k
        \\d_{k+1} = a_2c_k + a_1 d_k
        \\r_{1,k+1} = r_{1,k} + |c_k|,
        \\r_{2,k+1} = r_{2,k} + |d_k| + |c_k|,
    \end{cases}
\end{align}
where $a_1$ and $a_2$ are the coefficients of the matrix $A$ defined in \eqref{eq:lin_sys}. Also, define
$$ r_{i,\infty} :=  \lim_{k\rightarrow\infty} r_{i,k}, \quad \forall i\in\{1,2\}.$$
Then, we can prove the following proposition:
\begin{proposition}\label{prop:closed_form}
{Assume that $\Xcal$ is an invariant set for the autonomous system associated to \eqref{eq:lin_sys}, i.e., $Ax\in\Xcal, \forall x\in\Xcal$}. Then, for the system described by \eqref{eq:lin_sys}-\eqref{eq:count_cost}, it holds that:
\begin{enumerate}
    \item\label{item:r_finite} $r_{i,\infty }< \infty, \forall i\in\{1,2\}$;
    \item\label{item:value} If $R_i \geq r_{i,\infty}, \forall i\in\{1,2\}$, the value function at iteration $k$ can be expressed as $J_{k}(x) = \sum_{j=0}^{k-1} (|c_j x_1 + d_j x_2| + |c_j x_2|)$, $\forall x\in\Xcal, \forall k\in\Z_{\geq 1}$, and ${J^\star_\infty}(x)=\lim_{k\rightarrow\infty}J_k(x)$. Also, the optimal policy at iteration $k$ satisfies $\mu_k(x) = 0, \forall x\in\Xcal, \forall k\in\Z_{\geq 1}$.
\end{enumerate}
\end{proposition}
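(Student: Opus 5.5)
For item \ref{item:r_finite}, I would first solve the recursion \eqref{eq:coeff_updates} in closed form. Since $c_{k+1}=a_1c_k$ with $c_0=1$, we get $c_k=a_1^k$. Then $d_{k+1}=a_1d_k+a_2a_1^k$ with $d_0=0$, and induction gives $d_k=k a_2 a_1^{k-1}$; equivalently, $[c_k\ d_k]$ is the first row of $A^k$. Hence $r_{1,\infty}=1+\sum_{k\ge0}|a_1|^k=1+\frac{1}{1-|a_1|}$ and $r_{2,\infty}=1+\sum_{k\ge0}(|a_1|^k+k|a_2||a_1|^{k-1})$. Both series converge because $|a_1|<1$ (ratio test, or the closed form $\sum k q^{k-1}=1/(1-q)^2$). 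The sequences are also nondecreasing, so $r_{i,k}\le r_{i,\infty}$ for all $k$.

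For item \ref{item:value}, I would argue by induction on $k$, using the observation that $\sum_{j=0}^{k-1}(|c_jx_1+d_jx_2|+|c_jx_2|)=\sum_{j=0}^{k-1}\|A^jx\|_1$. This holds because $A^j=\begin{bmatrix}c_j&d_j\\0&c_j\end{bmatrix}$, which follows from the same recursion. Invariance gives $\Xcal_k=\Xcal$ for all $k$, since $u=0$ is always admissible. For $k=1$, $J_1(x)=\min_u \|x\|_1+R_1|u_1|+R_2|u_2|=\|x\|_1$ with $\mu_1=0$. For the inductive step, assume $J_{k}(y)=\sum_{j=0}^{k-1}\|A^jy\|_1$ on $\Xcal$. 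Then $J_{k+1}(x)=\min_u \|x\|_1+R_1|u_1|+R_2|u_2|+\sum_{j=0}^{k-1}\|A^j(Ax+u)\|_1$. Choosing $u=0$ gives the claimed value $\sum_{j=0}^{k}\|A^jx\|_1$, which is an upper bound. For the lower bound, the triangle inequality gives $\|A^j(Ax+u)\|_1\ge\|A^{j+1}x\|_1-\|A^ju\|_1$, and
\[
\|A^ju\|_1\le |c_j||u_1|+(|d_j|+|c_j|)|u_2|.
\]
Summing over $j=0,\dots,k-1$, the total penalty from $u$ is at most $(r_{1,k}-1)|u_1|+(r_{2,k}-1)|u_2|$. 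This is at most $R_1|u_1|+R_2|u_2|$ because $r_{i,k}\le r_{i,\infty}\le R_i$. Therefore every feasible $u$ costs at least as much as $u=0$, so $\mu_{k+1}=0$ is a minimizer and the formula holds for $J_{k+1}$.

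Finally, $J^\star_\infty=\lim_k J_k$ follows from the equivalence between the CITOC problem and the DP iterations recalled from \cite{baotic2006constrained}. The limit exists because $J_k$ is nondecreasing and bounded above by $\sum_j\|A^j\|\|x\|_1<\infty$. If one prefers not to invoke that equivalence, it can be argued directly. The zero-input sequence is feasible by invariance, which gives $J^\star_\infty\le\lim J_k$. For the reverse inequality, any truncated cost over horizon $k$ is at least $J_k(x_0)$, because the same triangle-inequality bound applies to the whole sequence.

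The main obstacle is the lower-bound step: one must check that the penalty $\|A^ju\|_1$ is dominated coordinate-wise by exactly the increments in $r_{i,k}$. That is where the specific definitions of $r_{1,k}$ and $r_{2,k}$, including their initial offset of $1$, are needed.
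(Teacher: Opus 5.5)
Your proof is correct and follows the same route as the paper. Both arguments use induction on $k$, take $u=0$ (feasible by invariance) for the upper bound, and use a triangle-inequality estimate for the lower bound; the paper packages that estimate as Lemma~\ref{lemma:aux}, while you write it through $A^j$ as $\sum_{j=0}^{k-1}\|A^ju\|_1\le (r_{1,k}-1)|u_1|+(r_{2,k}-1)|u_2|$, and your direct argument for $J^\star_\infty=\lim_k J_k$ is a small addition to the paper's appeal to \cite{baotic2006constrained}.

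One correction to your closing remark: that same bound shows the offset $1$ in $r_{i,k}$ is slack rather than needed, since only $R_1\ge\sum_{l\ge0}|c_l|$ and $R_2\ge\sum_{l\ge0}(|c_l|+|d_l|)$ are used. This is also why the paper's example still works with $R_1=5$, even though your value $r_{1,\infty}=1+\frac{1}{1-|a_1|}$ is the correct limit rather than the $\frac{1}{1-|a_1|}$ printed in the main text.
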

The proof is given in Appendix \ref{app:proof}, and in Appendix \ref{app:explicit}, we give an analytic expression for ${J^\star_\infty}$ to show that it is not {formally} PWA.

The intuition of the above result is that, if the input weights $R_1$ and $R_2$ are large enough, the minimizer of the control problem is $\mu_\infty^\star(x)=0$ for any $x\in\Xcal$, which allows us to easily find a closed-form solution for the optimal value function.

We give an explicit relation for the input weights $R_i$, which follows from $R_{i} \geq r_{i,\infty}, \forall i\in\{1,2\}.$
First, note that we have a simple closed-form solution for \eqref{eq:coeff_updates}, given {the} initial conditions \eqref{eq:coeff_init}:
\begin{align}\label{eq:coeff_convergence}
    \begin{cases}
        c_{k} = a_1^{k}
        \\d_{k} = k a_1^{k-1} a_2
        \\r_{1,k} = 1 + \sum_{l=0}^{k-1}|a_1^l| 
        \\r_{2,k} = 1 + {|a_2|} \sum_{l=0}^{k-1} l|a_1^{l-1}| +  \sum_{l=0}^{k-1}|a_1^l| 
    \end{cases}
\end{align}
from which we have
\begin{align*}
    &r_{1,\infty} = \lim_{k\rightarrow\infty} r_{1,k} = \frac{1}{1-|a_1|}
    \\& r_{2,\infty} = \lim_{k\rightarrow\infty} r_{2,k} = 1 + \frac{|a_2|}{(1-|a_1|)^2} + \frac{1}{1-|a_1|}.
\end{align*}
By setting, e.g., $a_1 = 0.8, a_2 = 1$, {we observe that $\Xcal=[-10, 10]\times[-2, 2]$ is invariant for the corresponding autonomous system. Then,} we find that $R_1 = r_{1,\infty} = 5$ and $R_2 = 1 + 25 + 5 = 31$ are sufficiently large input penalties such that the minimizer of the infinite-horizon control problem \eqref{citoc_lin} is $\mu_\infty^\star(x)=0, \forall x\in\Xcal$. Then, in view of point \ref{item:value}) in Proposition \ref{prop:closed_form}, the value function ${J^\star_\infty}$ induces a partition with infinitely many regions, defined by the hyperplanes
\begin{align}\label{eq:hyperplanes}
\begin{split}
    &c_k x_1 + d_k x_2 = 0.8^k x_1 + k 0.8^{k-1} x_2=0
    \\& \Leftrightarrow 0.8 x_1 + k x_2 = 0, \forall k\in\Z_{\geq1},
\end{split}
\end{align}
as well as the hyperplanes
$x_1=0, \quad x_2=0,$
since the absolute value is non-differentiable wherever its argument is 0.
The hyperplanes described by \eqref{eq:hyperplanes} cross the origin, so they generate infinitely many regions in the compact set $\Xcal$. 
In addition, we see from \eqref{eq:hyperplanes} that the slope of ${J^\star_\infty}$ is different in each of these regions, according to the expression for the coefficients $c_k$ and $d_k$ in \eqref{eq:coeff_convergence} (see more details in Appendix \ref{app:explicit}).
This can also be seen graphically, in Figures \ref{fig:value1} and \ref{fig:value2}, which plot the resulting value function for the given example\footnote{Code available at: \url{https://github.com/fracordi/value-function-pwa}.}.

\begin{figure}
    \centering
    \includegraphics[width=0.7\linewidth]{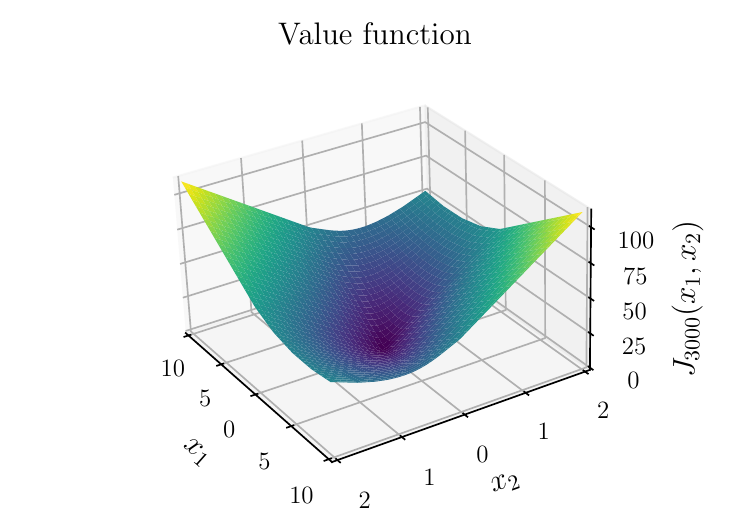}
    \caption{Here, ${J^\star_\infty}$ is approximated with 3000 iterations (thus, $J_{3000}$ is PWA, but the number of pieces grows with the {iteration index}).}
    \label{fig:value1}
\end{figure}
\begin{figure}
    \centering
    \includegraphics[width=.9\linewidth]{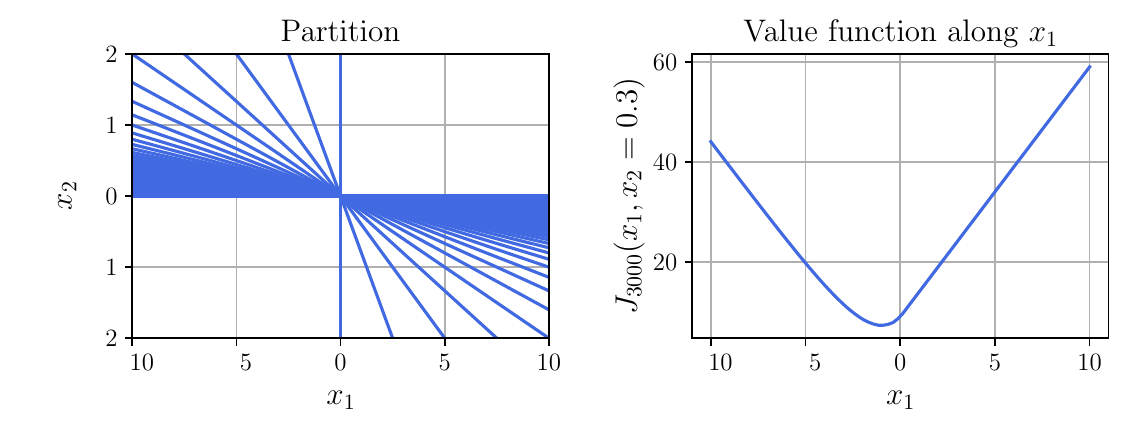}
    \caption{Partition induced by the value function (left), and value function along one coordinate (right).}
    \label{fig:value2}
\end{figure}

\begin{remark}
    {The above analysis gives a more formal interpretation of the results in \cite{baotic2006constrained}: for a finite $k\in\Z_{\geq0}$, it is clear that $J_k$ in \eqref{dp:iter_0}--\eqref{dp:iter_2} is a PWA function of $x$; however, $J^\star_\infty$ is not PWA in the usual sense}. Note that, in our case, this also does not imply that $J^\star_\infty$ is a globally smooth function: indeed, from the example above, it results that $J^\star_\infty$ is a function with infinitely many affine pieces in a compact set.
\end{remark}

\begin{remark}
    {The lack of a proper PWA structure can occur even for} linear dynamical systems with desirable properties, such as stability and controllability. In particular, the latter condition is not sufficient by itself. In \cite{baotic2006constrained} it is mentioned that some dynamical systems have the property of reaching the origin in a finite number of steps, i.e., they are controllable. However, if the input weight is too large in the optimal control problem, the system could still decay exponentially to the origin without reaching it, despite controllability.
\end{remark}

\section{Sufficient condition for PWA structure}\label{sec:suff}
In this section, we give an explicit sufficient condition to ensure that the value function of an infinite-horizon optimal control problem for a PWA system with $\ell_1$ or $\ell_\infty$ stage cost exhibits a finite number of pieces. We first state preliminary assumptions and technical lemmas, then we prove the main result, and finally we conclude with an example. 

\subsection{Preliminary assumptions and lemmas}
In the previous section, we saw that controllability is not sufficient to guarantee this property, as the structure of the value function also depends on the input weight matrix $R$. As we will see in the following proposition, the proposed sufficient condition requires the optimal policy $\mu^\star_\infty$ to steer the system to the origin in one step, for all points in a neighborhood of the origin. In view of the counterexample in the previous section, this will be the case if the matrix $R$ is not too large.
 
The requirements above are now formally stated in the following two assumptions:
\begin{assumption}\label{assum:interior}
There exists a polyhedral region $\mathcal{D}_j$ such that $0_{n+m} \in \mathrm{int}(\mathcal{D}_j)$. Moreover, there exists {a neighborhood $\mathcal{E} \subseteq \mathrm{Proj}_{\mathbb{R}^n}( \mathcal{D}_j)$ of $0_{n}$} such that for every $x \in \mathcal{E}$, there exists an $u$ satisfying $\left[\begin{array}{l}
x \\
u
\end{array}\right] \in \mathcal{D}_j$ and $A_jx+B_ju = 0_n$, where $(A_j, B_j)$ describe the system dynamics in the region $\Dcal_j$.
\end{assumption}
Note that, since $0_{n+m} \in \mathrm{int}(\mathcal{D}_j)${,} the affine term $f_j$ in the PWA dynamics in \eqref{eq:pwa_sys} must be $0$ for region $\Dcal_j$.

\begin{assumption}\label{assum:M}
There is a matrix $M\in\R^{n\times m}$ such that $R = M^\top QB_j$ and $\|M\|_{q}\leq 1$, where $\|\cdot\|_q$ is the dual norm of $\|\cdot\|_p$, i.e., $1/p+1/q = 1$.
\end{assumption}

For the proof of our main result, we need the following two key lemmas:
\begin{lemma}\label{lemma1}
Consider the following parametric optimization problem:
\begin{align}\label{eq:parametric_optimization}
    \min_{u}& \{q(x,u):=\|Ru\|_p+\|Q(A_jx+B_ju)\|_p\}\\
    \text{s.t.}& \left[\begin{array}{l}
x \\
u
\end{array}\right] \in \mathcal{D}_{j}. \nonumber
\end{align}
Under Assumptions \ref{assum:interior} and \ref{assum:M}, there exists a sufficiently small neighborhood $\Omega_r:=\{x\in \mathbb{R}^n| \|x\|_p\leq r\}$ of $0_n$ such that $\forall x \in \Omega_r$, there exists a PWA optimizer $\kappa^\star(\cdot)$ of \eqref{eq:parametric_optimization} satisfying $A_jx+B_j\kappa^\star(x)=0_n$.
\end{lemma}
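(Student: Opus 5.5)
The plan is to show that, for $x$ near the origin, every feasible $u$ satisfies $q(x,u)\ge q(x,\bar u)$ for some $\bar u$ with $A_jx+B_j\bar u=0_n$. The main tool will be a lower bound on $\|Ru\|_p$ obtained from Assumption~\ref{assum:M} and Hölder's inequality. We then have to pick such a $\bar u$ in a PWA way and keep it inside $\Dcal_j$.

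\emph{Step 1: the key inequality.} Fix $x$ and let $u$ be feasible. Assume for the moment that some $\bar u$ with $[x^\top\ \bar u^\top]^\top\in\Dcal_j$ satisfies $A_jx+B_j\bar u=0_n$; Assumption~\ref{assum:interior} guarantees this for $x\in\mathcal{E}$. Put $v:=u-\bar u$, so that $A_jx+B_ju=B_jv$. By Assumption~\ref{assum:M}, $Rv=M^\top QB_jv$. Using the operator norm induced by $\|\cdot\|_p$ and the identity $\|M^\top\|_{p}=\|M\|_q$, we get
\begin{align*}
\|Rv\|_p\le\|M\|_q\|QB_jv\|_p\le\|Q(A_jx+B_ju)\|_p .
\end{align*}
The triangle inequality then gives
\begin{align*}
q(x,\bar u)=\|R\bar u\|_p\le\|Ru\|_p+\|Rv\|_p\le q(x,u).
\end{align*}
Hence any feasible $\bar u$ that steers the state to the origin is optimal for \eqref{eq:parametric_optimization}. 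I still need to confirm the matching of the matrix norm convention. For $p=1$, the quantity $\|M\|_\infty$ must be interpreted consistently, via the dual norm, as the induced $\ell_1$ norm of $M^\top$. I would state this explicitly.

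\emph{Step 2: choice of $r$ and a PWA selection.} The problem is now to find a PWA map $\kappa^\star$ on $\Omega_r$ with $A_jx+B_j\kappa^\star(x)=0_n$ and $[x^\top\ \kappa^\star(x)^\top]^\top\in\Dcal_j$.
\begin{itemize}
\item First, choose $r$ small enough that $\Omega_r\subseteq\mathcal{E}$. This is possible because $\mathcal{E}$ is a neighborhood of $0_n$ and $\Omega_r$ is a polytope for $p\in\{1,\infty\}$.
\item Take $\kappa^\star(x):=-B_j^{\dagger}A_jx$ (Moore--Penrose pseudoinverse). Because $0_{n+m}\in\mathrm{int}(\Dcal_j)$ and $A_j\mathcal{E}\subseteq\mathrm{range}(B_j)$ by Assumption~\ref{assum:interior}, linearity gives $\mathrm{range}(A_j)\subseteq\mathrm{range}(B_j)$. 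Therefore $B_jB_j^\dagger A_jx=A_jx$, and the dead-beat condition holds for every $x$.
\item This map is linear, hence PWA, and continuous with $\kappa^\star(0)=0$. Since $0_{n+m}$ is interior to $\Dcal_j$, there is a ball around it contained in $\Dcal_j$. Shrinking $r$ so that $[x^\top\ \kappa^\star(x)^\top]^\top$ lies in that ball for all $x\in\Omega_r$ gives feasibility.
\end{itemize}

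\emph{Step 3: conclusion.} Applying Step~1 with $\bar u=\kappa^\star(x)$ shows that $\kappa^\star(x)$ attains the minimum of \eqref{eq:parametric_optimization} for every $x\in\Omega_r$, with $A_jx+B_j\kappa^\star(x)=0_n$.

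I expect the main obstacle to be the Hölder/dual-norm step: the bound $\|M^\top w\|_p\le\|M\|_q\|w\|_p$ has to hold with the norm convention intended in Assumption~\ref{assum:M}. I would settle this first, reading $\|M\|_q$ as the operator norm induced by $\ell_q$; then $\|M^\top\|_{p\to p}=\|M\|_{q\to q}$ for $p\in\{1,\infty\}$. A secondary concern is that the optimizer may not be unique. The lemma only asks for existence of such an optimizer, so this causes no difficulty.
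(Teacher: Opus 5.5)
Your proof is correct, but it takes a different route from the paper on both halves of the argument.

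\textbf{Optimality.} The paper argues on the dual side. It uses the subdifferential chain rule for $q$, takes $s\in\partial\|\cdot\|_p(R\bar u)$, and exhibits $v=-Ms\in\partial\|\cdot\|_p(0)$ with $R^\top s+B_j^\top Q^\top v=0$, so that $0\in\partial q(x,\bar u)$. Before that, it uses continuity of a multiparametric optimizer to argue that the constraint is inactive on $\Omega_r$. Your Step~1 is the primal counterpart of this certificate. The bound $\|Rv\|_p\le\|M\|_q\|QB_jv\|_p$, via $\|M^\top\|_{p\to p}=\|M\|_{q\to q}$, is exactly what $v=-Ms$ encodes. You obtain $q(x,\bar u)\le q(x,u)$ for every $u$ by the triangle inequality alone, with no subgradient calculus and no constraint-inactivity step, since feasibility of $\bar u$ is all you need. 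Your reading of $\|M\|_q$ as the induced $q$-norm agrees with the paper, which uses $\|Ms\|_q\le\|M\|_q\|s\|_q$. The paper's formulation, in turn, makes transparent why $\|M\|_q\le 1$ is the natural condition: it is precisely the requirement $v\in\partial\|\cdot\|_p(0)$.

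\textbf{PWA selection.} The paper invokes the existence of a continuous PWA optimizer from multiparametric programming. It then separately shows that every feasible dead-beat input is optimal. It does not show that this PWA optimizer is itself dead-beat, which the statement and its later use in Proposition~\ref{prop:PWA} require. Your explicit map $\kappa^\star(x)=-B_j^\dagger A_jx$ is justified by $\mathrm{range}(A_j)\subseteq\mathrm{range}(B_j)$, which follows from Assumption~\ref{assum:interior} because $\mathcal{E}$ contains a ball around $0_n$. This map is simultaneously linear, dead-beat, and feasible on $\Omega_r$. It therefore closes that step and even yields a linear optimizer.

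A minor remark: with the pseudoinverse choice, the requirement $\Omega_r\subseteq\mathcal{E}$ in your first bullet is no longer used. Only the ball around $0_{n+m}$ contained in $\mathcal{D}_j$ constrains $r$.
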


\emph{Proof:} Since the objective function $q$ is convex and PWA, according to \cite[Section 9.2]{borrelli2017predictive}, there exists a continuous PWA optimizer $\kappa^\star(\cdot)$ and obviously  $\kappa^\star(0_n) = 0_m$.
{From Assumption \ref{assum:interior}, we know that $0_{n+m}\in\text{int}(\Dcal_j)$. This, together with the continuity of $\kappa^\star$, ensures that there exists $r\in\R_{>0}$ and a set $\Omega_r := \{x\in\R^n \mid \|x\|_p\leq r\}$, such that}
\begin{align}\label{eq:omega_condition}
{\left[\begin{array}{l}
x \\
\kappa^\star(x)
\end{array}\right] \in \mathrm{int}(\mathcal{D}_j), \quad \forall x \in \Omega_r.}
\end{align}
In other words, the constraint of {\eqref{eq:parametric_optimization}} is inactive and can be removed when $x \in \Omega_r$. {In addition, if \eqref{eq:omega_condition} is satisfied for a certain $r_1>0$, it is also satisfied for any $r_2\in(0, r_1]$, since in this case we have $\Omega_{r_2}\subseteq \Omega_{r_1}$.
Therefore, it is always possible to select $r$ to be small enough that $\Omega_r\subseteq \Ecal$. Hence, from Assumption \ref{assum:interior}, we know that there exists $u\in\text{Proj}_{\R^m}(\Dcal_j)$ such that $A_j x + B_j u = 0_n$, $\forall x\in\Omega_r$.} 

{Now, we prove that such $u$ is indeed optimal for any $x\in\Omega_r$.} For any $x \in \Omega_r$, define $u^\star = \kappa^\star(x)$, which is one of the optimal solutions. Since the constraint of \eqref{eq:parametric_optimization} can be removed and $q$ is convex, according to \cite{boyd2022subgradients}, the optimality of $u^\star$ is equivalent to 
\begin{equation}\label{eq:subgradient}
    0_m \in \partial q(x,u^\star),
\end{equation}
where $\partial q(x,u^\star)$ is the {subgradient} of $q$ evaluated at $u^\star$. According to the chain rule for {subgradients} \cite{beck2017first}, the right-hand side of \eqref{eq:subgradient} can be expressed as
\begin{align}\label{eq:optimaility}
\begin{split}
    \partial q(x,u^\star) = 
    & R^\top \partial\|\cdot\|_p(Ru^\star) \\& + B^\top_jQ^\top\partial\|\cdot\|_p(Q(A_jx+B_j u^\star)),
\end{split}
\end{align}
where in this case the symbol ``+'' denotes the Minkowski sum of sets, and the subgradient of the $p$-norm function $\|\cdot\|_p(y)$ is computed as $\partial \|\cdot\|_p(y)=\left\{g \mid\|g\|_{q} \leq 1, g^\top y=\|y\|_p\right\}$ \cite{beck2017first, boyd2022subgradients}.

Now, since Assumption \ref{assum:interior} holds, consider any $\bar u$ that satisfies  $\left[\begin{array}{l}
x \\
\bar u
\end{array}\right] \in \mathcal{D}_j$ and $A_jx+B_j\bar u = 0_n$. Considering further any $s \in \partial \|\cdot\|_p(R\bar u )$, define $v = -Ms$, where $M$ is from Assumption \ref{assum:M}. Then, we have the following relation:
\begin{equation}\label{eq:s_and_v}
    R^\top s+B_j^\top Q^\top v=R^\top s-B_j^\top Q^\top M s = 0,
\end{equation}
\begin{equation}\label{eq:v1}
    \|v\|_q = \|Ms\|_q \leq \|s\|_q\leq1,
\end{equation}
where \eqref{eq:s_and_v} holds by definition of $R$ in Assumption \ref{assum:M}, the first inequality in \eqref{eq:v1} follows from $\|M\|_{q} \leq 1$ (in view of the definition of matrix norm), and the second inequality in \eqref{eq:v1} follows from the fact that $s$ is the subgradient of the $p$-norm function. Moreover, since $A_jx+B_j\bar u = 0_n$, obviously we have
\begin{equation}\label{eq:v2}
    v^\top Q(A_jx+B_j\bar u) = \|Q(A_jx+B_j\bar u)\|_p = 0.
\end{equation}
Combining \eqref{eq:v1} and \eqref{eq:v2} yields that $v$ is an element of the subgradient $\partial\|\cdot\|_p(Q(A_jx+B_j \bar u))$. By noticing \eqref{eq:s_and_v} and that \eqref{eq:subgradient} is the sufficient and necessary optimality condition, we conclude that $\bar u$ is one of the optimal solutions to \eqref{eq:parametric_optimization} for the considered $x \in \Omega_r$. $\hfill\Box$

Before stating the second lemma, we note that the following multi-step Bellman equation
\begin{align}\label{eq:bellman}
    J^\star_\infty(x_0) = \min_{\substack{x_1,...,x_{N}, \\ u_0,...,u_{N-1}}} &\sum_{t=0}^{N-1} ( \|Qx_t\|_p+ \|Ru_t\|_p )  +J^\star_\infty(x_N)\\
    \text{s.t.} \ & \eqref{eq:pwa_sys}, \;\forall t\in\{0,1,...,N-1\}\nonumber\\
    &J^\star_\infty(x_N) <\infty \nonumber
\end{align}
holds for any $N \in {\Z_{>0}}$.

\begin{lemma}\label{lemma2}
    Under Assumptions \ref{assum:origin} and \ref{assum:Jfinite}, for any initial state $x_0\in \Xcal_\infty $ and for any $r>0$, there exists a finite positive integer $\bar N(x_0, r)$ such that the optimal predicted state $x^\star_{\bar N(x_0, r)}$ resulting from \eqref{eq:bellman} with $N = \bar N(x_0, r)$ satisfies $x^\star_{\bar N(x_0, r)} \in \Omega_r$.
\end{lemma}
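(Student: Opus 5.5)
The plan is a contradiction argument based on the summability of the optimal stage costs and the full column rank of $Q$.

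\emph{Step 1.} Fix $x_0\in\Xcal_\infty$ and $r>0$. By Assumption \ref{assum:Jfinite}, the minimizer $U^\star_\infty=\{u^\star_t\}_{t=0}^\infty$ exists. It generates the optimal trajectory $\{x^\star_t\}_{t=0}^\infty$ with
\begin{align*}
\sum_{t=0}^\infty \left(\|Qx^\star_t\|_p+\|Ru^\star_t\|_p\right)=J^\star_\infty(x_0)<\infty .
\end{align*}

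\emph{Step 2.} Show that, for every $N$, the first $N$ steps of this trajectory, $(x^\star_1,\dots,x^\star_N,u^\star_0,\dots,u^\star_{N-1})$, are optimal for the multi-step Bellman problem \eqref{eq:bellman}. By the principle of optimality, the tail $\{u^\star_t\}_{t\ge N}$ is optimal from $x^\star_N$. Hence $J^\star_\infty(x^\star_N)=\sum_{t\ge N}(\|Qx^\star_t\|_p+\|Ru^\star_t\|_p)<\infty$. Substituting this into the cost of \eqref{eq:bellman} recovers exactly $J^\star_\infty(x_0)$, so the truncated sequence attains the minimum and is feasible. We therefore identify ``the optimal predicted state'' $x^\star_N$ in \eqref{eq:bellman} with the $N$-th state of the infinite-horizon optimal trajectory. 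If \eqref{eq:bellman} has several minimizers, the statement should be read for this one. Alternatively, the argument below can be applied to any optimal $N$-step prefix concatenated with an optimal tail, which is again an optimal infinite sequence.

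\emph{Step 3.} Since the series above converges, $\|Qx^\star_t\|_p\to0$. Because $Q$ has full column rank (Assumption \ref{assum:Jfinite}), there is $\sigma>0$ such that $\|Qx\|_p\ge\sigma\|x\|_p$ for all $x$; one can take $\sigma=\min_{\|x\|_p=1}\|Qx\|_p>0$, which is attained by compactness of the unit sphere. Hence
\begin{align*}
\|x^\star_t\|_p\le \sigma^{-1}\|Qx^\star_t\|_p\to0 .
\end{align*}
Quantitatively, suppose $\|x^\star_t\|_p>r$ for all $t<N$. Then $J^\star_\infty(x_0)\ge N\sigma r$. So some index $t\le \bar N(x_0,r):=\lceil J^\star_\infty(x_0)/(\sigma r)\rceil$ (taking $\bar N\ge1$) satisfies $x^\star_t\in\Omega_r$. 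This is finite because $J^\star_\infty(x_0)<\infty$.

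\emph{Step 4 (main obstacle).} The lemma asks for $x^\star_{\bar N}\in\Omega_r$ exactly at the index $\bar N$, and the trajectory may leave $\Omega_r$ after entering it. I would handle this by using convergence rather than first entry. Since $\|x^\star_t\|_p\to0$, there is a finite $T$ such that $x^\star_t\in\Omega_r$ for all $t\ge T$, and we set $\bar N(x_0,r):=T$. This choice is finite for each $x_0$, which is all the lemma claims; the statement requires no uniform bound over $x_0$.

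The remaining point is the consistency of the identification in Step 2 when minimizers are non-unique. I would settle it as follows. Any minimizer of \eqref{eq:bellman} with $N=T$, concatenated with an optimal tail from its $x_N$ (which exists by Assumption \ref{assum:Jfinite}), yields an infinite-horizon optimal sequence. So the choice of $\bar N$ can be made along whichever optimal trajectory the predicted state is taken from.
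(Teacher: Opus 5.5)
Your proposal is correct and follows essentially the same route as the paper. Both use full column rank of $Q$ to get $\|Qx\|_p \geq \sigma \|x\|_p$ (the paper writes $\lambda(Q)$ in terms of $\sigma_{\min}(Q)$), so an optimal trajectory that stayed outside $\Omega_r$ would have infinite cost, contradicting $x_0\in\Xcal_\infty$. Your additional care goes beyond the paper's argument: you identify the Bellman minimizer with the truncated infinite-horizon trajectory, and you use $\|x^\star_t\|_p\to 0$ in Step 4 to obtain a positive index. The first-entry index from Step 3 already suffices, except in the trivial case $x_0\in\Omega_r$, where Step 4 supplies a positive $\bar N$.
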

\emph{Proof:} The lemma can be straightforwardly proved by contradiction. Suppose it is not true, then there exists an initial state $x_0\in \Xcal_\infty$ and a positive $r$ such that the optimal solution $x^\star_t$ to problem \eqref{eq:bellman}, {where the cost function is in the limit for $N\to\infty$}, satisfies $\|x^\star_t\|_p>r$. Consequently, we have
\begin{align}\label{eq:J_x0}
    J^\star_\infty(x_0)\geq &\sum_{t=0}^{\infty} \|Qx^\star_t\|_p \geq\sum_{t=0}^{\infty} \lambda(Q) \|x^\star_t\|_p=\infty
\end{align}
where $\lambda(Q)=\frac{\sigma_{\min }(Q)}{\sqrt{n}}$ if $p=1$, ${\lambda(Q)=\frac{\sigma_{\min }(Q)}{\sqrt{n_Q}} \ \text{if} \ p=\infty}$, and $\sigma_{\min }(Q)$ represents the smallest singular value of $Q$, which is positive since $Q$ is full rank. Consequently, \eqref{eq:J_x0} contradicts $x_0\in \Xcal_\infty$. $\hfill\Box$

\subsection{Main result}
We can now state the main result about the structure of the value function, the proof of which uses Lemma \ref{lemma1} and \ref{lemma2}.
\begin{proposition}\label{prop:PWA}
    {Let Assumptions \ref{assum:origin}--\ref{assum:M} hold, and for any $C\in\R_{\geq0}$ define 
     $$ \Xcal_C:= \{x\in\Xcal_\infty: J^\star_\infty(x) \leq C\}.$$
     Then, over any polyhedral subset of $\Xcal_C$, } the optimal value function $J_{\infty}^{\star}$ and the optimal policy $\mu^\star_\infty$ are time-invariant PWA functions 
    {in the sense of Definition \ref{def:pwa}}.
\end{proposition}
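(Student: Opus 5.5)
The strategy is to show that, on $\Omega_r$ from Lemma \ref{lemma1}, the value function is a finite PWA function. Then, on $\Xcal_C$, the infinite-horizon problem collapses to a finite-horizon problem with a uniform horizon $\bar N$ and a PWA terminal cost. The finite-horizon DP result of \cite{baotic2006constrained} (finitely many pieces for any finite $k$) then yields the claim.

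\textbf{Step 1 (local structure).} For $x\in\Omega_r$, we show that $J^\star_\infty(x)=\|Qx\|_p+\min_u\{\|Ru\|_p+\|Q(A_jx+B_ju)\|_p\}$, and that the minimum is attained at $u$ with $A_jx+B_ju=0_n$, after which the state stays at the origin at zero cost.
\begin{itemize}
\item For the lower bound, we compare any feasible trajectory from $x$ to a trajectory that jumps to $0_n$ at step one. We would try to use Assumption \ref{assum:M} directly. Since $R=M^\top QB_j$ and $\|M\|_q\le1$, we have $\|Ru\|_p\le\|QB_ju\|_p$ (the duality bound, up to the precise norm convention, which is the content of Lemma \ref{lemma1}). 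Any trajectory staying in $\Dcal_j$ near the origin therefore pays at least as much as steering to zero in one step.
\item Hence $J^\star_\infty(x)=\|Qx\|_p+\|R\kappa^\star(x)\|_p$ on $\Omega_r$, with $\kappa^\star$ the PWA optimizer of Lemma \ref{lemma1}. This is a finite PWA function by the multiparametric LP results of \cite[Section 9.2]{borrelli2017predictive}.
\item If the global comparison is delicate, we would simply shrink $r$. We would then use Lemma \ref{lemma1} together with the Bellman equation \eqref{eq:bellman} with $N=1$, restricting attention to optimal trajectories that remain in $\Dcal_j$ (justified by $0_{n+m}\in\mathrm{int}(\Dcal_j)$ and by asymptotic stability of the optimal closed loop).
\end{itemize}

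\textbf{Step 2 (uniform horizon).} On $\Xcal_C$ we need an $\bar N$ independent of $x_0$ such that $x^\star_{\bar N}\in\Omega_r$. Lemma \ref{lemma2} only gives a pointwise $\bar N(x_0,r)$, but the argument in its proof quantifies. Let $\lambda(Q)$ be as in that proof. If $\|x^\star_t\|_p>r$ for all $t<N$, then $C\ge J^\star_\infty(x_0)\ge N\lambda(Q)r$. Hence $\bar N:=\lceil C/(\lambda(Q)r)\rceil+1$ works uniformly over $\Xcal_C$. Because $\Omega_r$ is invariant under the one-step-to-origin policy, reaching it at some $t\le\bar N$ allows us to take exactly $N=\bar N$.

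\textbf{Step 3 (finite-horizon reduction).} Define $\tilde J(x)=J^\star_\infty(x)$ for $x\in\Omega_r$, and $+\infty$ otherwise. For every $x_0\in\Xcal_C$, \eqref{eq:bellman} with $N=\bar N$ and the terminal constraint $x_N\in\Omega_r$ yields $J^\star_\infty$. Restricting the terminal set can only increase the minimum, and Step 2 shows that an optimal trajectory satisfies it. This is a finite-horizon constrained optimal control problem for a PWA system, with $\ell_p$ stage costs, a polyhedral terminal set $\Omega_r$ (a $p$-norm ball with $p\in\{1,\infty\}$), and a PWA terminal cost. By \cite{baotic2006constrained} (equivalently, $\bar N$ steps of \eqref{dp:iter_1}--\eqref{dp:iter_2} started from this terminal cost, or an mp-MILP argument), the value function and an optimizer are PWA with finitely many pieces on the feasible set. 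Intersecting with any polyhedral subset of $\Xcal_C$ preserves finiteness. The first input of the optimizer defines a time-invariant $\mu^\star_\infty$, because the same reduction holds at every state of $\Xcal_C$, and $\Xcal_C$ is forward invariant since costs decrease along optimal trajectories.

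\textbf{Main obstacle.} The hardest part is Step 1: establishing that one-step steering to the origin is globally optimal for the infinite-horizon problem near the origin, not merely for the one-step problem \eqref{eq:parametric_optimization}. I expect to handle it by combining Lemma \ref{lemma1} with the Bellman equation \eqref{eq:bellman} and a shrinking of $r$ so that all relevant optimal trajectories stay inside $\Dcal_j$.
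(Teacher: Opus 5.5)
Your proposal is correct and follows the paper's proof essentially step for step: the local characterization $J^\star_\infty(x)=\|Qx\|_p+\|R\kappa^\star(x)\|_p$ on $\Omega_r$ via Lemma~\ref{lemma1} and Assumption~\ref{assum:M}, a uniform horizon bound over $\Xcal_C$ obtained from $J^\star_\infty\le C$, and reduction to a finite-horizon PWA problem whose terminal cost is PWA on $\Omega_r$. The only difference is in Step~1, where the paper proves $J_k=J_2$ on $\Omega_r$ by induction over the DP iterates while you compare trajectories directly, and your ``main obstacle'' closes with the inequality the paper itself uses: since the tail cost from $x_1$ is at least $\|Qx_1\|_p$, any trajectory from $x\in\Omega_r$ costs at least $\|Qx\|_p+\|Ru_0\|_p+\|Qx_1\|_p\ge\|Qx\|_p+\|R\kappa^\star(x)\|_p$, so only the first step needs to lie in $\Dcal_j$ (which the paper also tacitly assumes), and your appeal to asymptotic stability is unnecessary and would not control $u_0$ anyway.
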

\emph{Proof:} The proof consists of two parts.

\emph{Part 1: PWA property of $J^\star_\infty$ over $\Omega_r$.} Consider any $x \in \Omega_r$, by performing the DP update \eqref{dp:iter_1}, we have 
\begin{align*}
    J_1(x) &= \|Qx\|_p,\\
    J_2(x) &= \|Qx\|_p + \|R\kappa^\star(x)\|_p,
\end{align*}
where $\kappa^\star$ is the PWA function defined in Lemma \ref{lemma1}, the first equality holds because of the initialization \eqref{dp:iter_0}, and the last equality is true because of Lemma \ref{lemma1}. 

Then, we will prove by {induction that $J_k = J_2, \forall k\in\Z, k\geq2$. The case for $k=2$ has been shown before via direct inspection. Hence, now we prove that if $J_k(x) = \|Qx\|_p + \|R\kappa^\star(x)\|_p$ then also $J_{k+1}(x) = \|Qx\|_p + \|R\kappa^\star(x)\|_p, \forall k\in\Z, k\geq2$. For the sake of contradiction,} suppose it is not true. 
For any $x \in \Omega_r$, {the DP update \eqref{dp:iter_1} at iteration $k+1$ yields}:
\begin{align}\label{eq:J3}
    {J_{k+1}}(x) = \|Qx\|_p + \min_u \ \{&\|Ru\|_p+ \|Qf_\mathrm{PWA}(x,u)\|_p\nonumber\\
    & + \|R\kappa^\star(f_\mathrm{PWA}(x,u))\|_p\},
\end{align}
since {in \eqref{dp:iter_1} we have $\Xcal_k = \Xcal_0 = \Omega_r$ because $\kappa^\star$ is a feasible policy over $\Omega_r$, which allows us to use the induction hypothesis to expand $J_k(f_\mathrm{PWA}(x,u))$ in \eqref{dp:iter_1}, where} the constraints are removed because $\Omega_r$ is a sufficiently small neighborhood.
Since ${J_{k+1} \ne J_k}$ and $x\in \mathrm{Proj}_{\mathbb{R}^n}( \mathcal{D}_j)$, the minimizer $u'$ of the optimization problem in \eqref{eq:J3} must satisfy $f_\mathrm{PWA}(x,u')=A_jx+B_ju' \ne 0$, {otherwise the resulting cost would coincide with $J_k(x)$}. Now, consider any $u^+$ satisfying $f_\mathrm{PWA}(x,u^+)= 0$. Due to the optimality of $u'$, we have
\begin{align}\label{eq:contradict1}
    \|Ru'\|_p + {J_k}(f_\mathrm{PWA}(x,u'))&<  \|Ru^+\|_p + {J_k}(f_\mathrm{PWA}(x,u^+))\nonumber\\
    &=\|Ru^+\|_p,
\end{align}
where the last equality is the result of ${J_k}(0_n)= 0$. 

On the other hand, since ${J_k}(f_\mathrm{PWA}(x,u))\geq \|Qf_\mathrm{PWA}(x,u)\|_p$ for any $x$ and $u$, we have
\begin{align}\label{eq:contradict2}
    & \|Ru'\|_p+{J_k}(f_\mathrm{PWA}(x,u')) \nonumber
    \\& \geq \min_u\ {\{} \|Ru\|_p+\|Qf_\mathrm{PWA}(x,u)\|_p {\}}
    =\|R\kappa^\star(x)\|_p,
\end{align}
where the {equality in the} second line results from Lemma \ref{lemma1} and $\kappa^\star(x)$ satisfies $f_\mathrm{PWA}(x,\kappa^\star(x))=0$. Since $u^+$ could be any input satisfying $f_\mathrm{PWA}(x,u^+)= 0$, \eqref{eq:contradict2} contradicts \eqref{eq:contradict1}. This proves {that $J_{k+1} = J_k$ and completes the induction step. Therefore, $J^\star_\infty$ is PWA over the set $\Omega_r$.}

\emph{Part 2: PWA property of $J^\star_\infty$ over {$\Xcal_C$}.} Now, consider any {$x_0 \in \Xcal_C$, for any $C\in\R_{\geq0}$.} According to Lemma \ref{lemma2}, $J^\star_\infty(x_0)$ can be represented by \eqref{eq:bellman} with $N= \bar N(x_0,r)$. From Lemma \ref{lemma2}, the optimal predicted state $x^\star_{\bar N(x_0, r)}$ of \eqref{eq:bellman} satisfies $x^\star_{\bar N(x_0, r)} \in \Omega_r$. {For a given horizon $N$ and initial condition $x_0$, let us denote by $x^\star_N(x_0)$ the $N$-th predicted state when $N$ is used in \eqref{eq:bellman}, from the initial state $x_0$. It is clear that if $x^\star_{\bar N(x_0, r)} \in \Omega_r$, then $x^\star_{N}(x_0) \in \Omega_r, \forall N\geq \bar N(x_0, r)$, since the next state is $0_n$ under the optimal policy in $\Omega_r$. Also, without loss of generality, let us assume in this proof that $\bar N(x_0, r)$ is the first time step for which $x^\star_{\bar N(x_0, r)} \in \Omega_r$, and that $\bar N(x_0, r) = 0$ if $x_0\in\Omega_r$.
}

{We now show that there exists a uniform upper bound for $\bar N(x_0,r)$ over $\Xcal_C$. Let us consider an arbitrary $\eta_r\in\R$ such that $0 < \eta_r \leq \inf_{x\not\in\Omega_r}\|Qx\|_p$, with $p\in\{1,\infty\}$. Note that $\inf_{x\not\in\Omega_r}\|Qx\|_p > 0$ since $0_n\in\Omega_r$ and $\|Qx\|_p=0$ if and only if $x=0_n$, in view of Assumption \ref{assum:Jfinite}. Also, let $u^\star_0(x_0),...,u^\star_{\bar N(x_0, r)}(x_0)$ be the optimal input sequence resulting from \eqref{eq:bellman}. Following the same lines of the proof of \cite[Theorem 1]{chmielewski1996constrained}, we see that 
\begin{align*}
    & J^\star_\infty(x_0) 
    \\& = \sum_{t=0}^{\bar N(x_0, r)-1} \left( \|Qx_t\|_p + \|R u^\star_t(x_0)\|_p\right) +J^\star_\infty(x_{\bar N(x_0, r)})
    \\& \geq \sum_{t=0}^{\bar N(x_0, r)-1} \|Qx_t\|_p 
    \\& \geq \eta_r \bar N(x_0, r).
\end{align*}
Together with the definition of $\Xcal_C$, we then obtain
$$\sup_{x_0\in\Xcal_C} \bar N(x_0, r) \leq \sup_{x_0\in\Xcal_C} \frac{J^\star_\infty(x_0)}{\eta_r} \leq \frac{C}{\eta_r}.$$
Hence, there exists $\bar N(r):=\sup_{x_0\in\Xcal_C} \bar N(x_0, r)<\infty$ such that $\bar N(x_0, r) \leq \bar N(r), \forall x_0 \in\Xcal_C$.} Since $J^\star_\infty$ is PWA over the set $\Omega_r$ {and since $\Omega_r$ is reached in at most $\bar N(r)<\infty$ steps for any $x_0\in\Xcal_C$}, there exists an optimal policy that is PWA over {any polyhedral subset of $\Xcal_C$}, and the optimal value function $J^\star_\infty$ is also PWA {with a finite number of affine pieces over such set}, according to \cite[Theorem 3]{borrelli2005dynamic} and \cite[Theorem 1]{he2024approximatea_pwa}. This completes the proof of Proposition \ref{prop:PWA}. $\hfill\Box$

\subsection{Example}
To test the validity of our results\footnote{Code available at: \url{https://github.com/fracordi/value-function-pwa}.}, we consider $Q = B = I_2$, $a_1=0.8, a_2=1$; i.e., as in Section \ref{sec:counterexample}. We consider the state space $\Xcal=[-0.25, 0.25]^2$. We choose $M$ in Assumption \ref{assum:M} from the set $\{0.9I_2, I_2, 1.1I_2\}$. Figure \ref{fig:region} shows the resulting $\Omega_r$ for each choice of $M$. The regions have been determined by repeatedly solving the optimization problem \eqref{eq:parametric_optimization} for $x\in\Xcal$ (via gridding), and by tracing the points for which the next state is 0. More specifically, given that $M=\kappa I_2$, it results that any value of $\kappa\in[0,1)$ gives the light-blue region in Figure \ref{fig:region} (i.e., the box $[-0.25, 0.25]^2$), $\kappa=1$ gives the dark-blue region, and $\kappa>1$ yields an empty set. This confirms the validity of Proposition \ref{prop:PWA}.
\begin{figure}
    \centering
    \includegraphics[width=.8\linewidth]{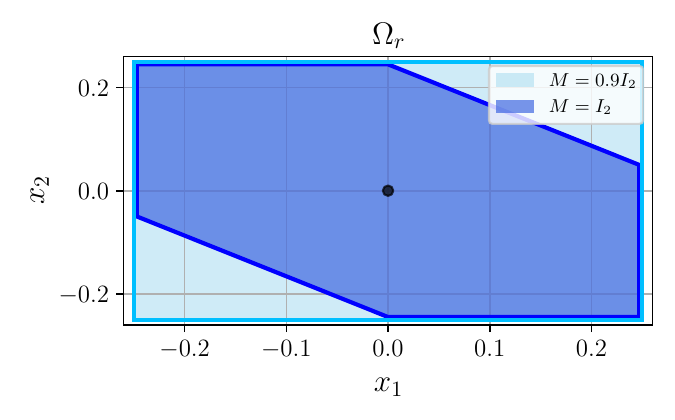}
    \caption{$\Omega_r$ for $M=0.9I_2$ (lightblue), and for $M=I_2$ (blue). For $M=1.1I_2$ $\Omega_r$ is an empty set.}
    \label{fig:region}
\end{figure}

At this point, one could wonder whether the condition on $M$ in Assumption \ref{assum:M} is both necessary and sufficient. It turns out that such condition is necessary and sufficient for the DP iterations \eqref{dp:iter_0}--\eqref{dp:iter_2} to terminate in two iterations; however, it is not necessary to just guarantee that the value function has a finite number of pieces.
For example, it is possible to design $R$ such that Assumption \ref{assum:M} is not satisfied, but the DP iterations \eqref{dp:iter_0}--\eqref{dp:iter_2} terminate in a finite number of steps (perhaps larger than two), in a set close to the origin; hence, the value function still has a finite number of pieces. 

Consider again our numerical example with $A_j = A$ defined as before, $\Xcal$ is now $[-0.25, 0.25]\times[-0.05, 0.05]$, and $M=1.1I_2$ is designed to violate Assumption \ref{assum:M}.  Note that this implies $R=\text{diag}(1.1, 1.1)$. From the DP iterations \eqref{dp:iter_0}--\eqref{dp:iter_2}, we have, $\forall x\in\Xcal$
\begin{align}\label{eq:dp_example}
\begin{split}
    & J_0(x) = 0
    \\& J_1(x) = \|x\|_1
    \\& J_2(x) = \|x\|_1 + \|Ax\|_1,
\end{split}
\end{align}
where $\mu_1(x) = \mu_2(x) = 0$, $\forall x\in\Xcal$. That fact that $\mu_1 = 0$ is immediate. Concerning $\mu_2$, a direct application of Lemma \ref{lemma:aux} in Appendix \ref{app:proof} gives $\mu_2(x) = 0, \forall x\in\Xcal$.

Then, for $\mu_3$ we have
\begin{align*}
    \mu_3(x) &= \begin{aligned}[t]
    \arg\min_{u} \ & \|x\|_1 + \|Ru\|_1 + \|A x + Bu\|_1
    \\& + \|A(A x + Bu)\|_1,
    \end{aligned}
\end{align*}
and, via numerical simulations, it can be seen that $ u^\star = -Ax$. By plugging this expression into the cost function, we find 
\begin{align*}
    J_3(x) = \|x\|_1 + \|RAx\|_1,
\end{align*}
and in a similar way we obtain $\mu_4 = \mu_3$; hence, $J_4 = J_3$.
This means that three iterations are sufficient for the value function to converge, for all $x\in\Xcal$. This implies ${J^\star_\infty}(x) = J_3(x)\ \forall x\in\Xcal$, thus ${J^\star_\infty}$ is PWA. 

We conclude that, although Proposition \ref{prop:PWA} provides general sufficient conditions for the value function to converge in exactly two iterations, problem-specific conditions can be more relaxed for the value function to be PWA.

\section{Conclusions}
\label{sec:conclusions} 
This technical note has presented a detailed study of the value function in CITOC problems for PWA systems  with $\ell_1$ or $\ell_\infty$ stage cost. {By means of an explicit example, we have clarified some existing results in the literature}, and thereafter we have proposed sufficient conditions {that ensure a proper PWA structure, i.e., with a finite number of pieces over compact sets}. Future work aims {to extend} these results to stochastic PWA systems.

\section*{Appendix}

\subsection{Proof of Proposition \ref{prop:closed_form}}\label{app:proof}
First, we need a preliminary lemma:
\begin{lemma}\label{lemma:aux}
    Consider any functions $F_i:\Xcal^2\rightarrow\R$, with $i\in\{1,...,h\}, h\in\Z_{\geq1}$, and $\alpha_{1,i}, \alpha_{2,i} \in \R$. Then if $R_1 \geq \sum_{i=1}^h |\alpha_{1,i}|$ and $R_2 \geq \sum_{i=1}^h |\alpha_{2,i}|$, we have that
    \begin{align}
         & \min_{u_1, u_2} R_1|u_1| + R_2|u_2| + \sum_{i=1}^h|F_i(x_1, x_2) + \alpha_{1,i} u_1 + \alpha_{2,i} u_2| \nonumber
         \\& = \sum_{i=1}^h|F_i(x_1, x_2)| \label{eq:triangle}
    \end{align}
holds $\forall x_1, x_2 \in \Xcal$, and the minimum is achieved for $u_1 = u_2 = 0$.
\end{lemma}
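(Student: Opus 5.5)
The plan is to use the triangle inequality together with an explicit feasible choice of inputs. This gives a lower bound on the objective that matches its value at $u_1=u_2=0$.

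\emph{Upper bound.} Setting $u_1=u_2=0$ in the left-hand side of \eqref{eq:triangle} gives exactly $\sum_{i=1}^h|F_i(x_1,x_2)|$. Hence the minimum is at most this quantity, and it suffices to show that no other choice of $(u_1,u_2)$ does better.

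\emph{Lower bound.} Fix $x_1,x_2$ and arbitrary $u_1,u_2\in\R$. For each $i$, apply the reverse triangle inequality:
\begin{align*}
|F_i(x_1,x_2)+\alpha_{1,i}u_1+\alpha_{2,i}u_2| \geq |F_i(x_1,x_2)| - |\alpha_{1,i}||u_1| - |\alpha_{2,i}||u_2|.
\end{align*}
Summing over $i$ and adding $R_1|u_1|+R_2|u_2|$ bounds the objective from below by
\begin{align*}
\sum_{i=1}^h|F_i(x_1,x_2)| + \Big(R_1-\sum_{i=1}^h|\alpha_{1,i}|\Big)|u_1| + \Big(R_2-\sum_{i=1}^h|\alpha_{2,i}|\Big)|u_2|.
\end{align*}
By the hypotheses on $R_1$ and $R_2$, both bracketed coefficients are nonnegative. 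The objective is therefore at least $\sum_i|F_i(x_1,x_2)|$ for every $(u_1,u_2)$.

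\emph{Conclusion.} The upper and lower bounds coincide, so the infimum equals $\sum_i|F_i(x_1,x_2)|$ and is attained at $u_1=u_2=0$. This is the claimed identity \eqref{eq:triangle}.

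There is no real obstacle here. The only step needing care is that the bound must hold uniformly in $(x_1,x_2)$, and it does, because $F_i$ enters only through $|F_i(x_1,x_2)|$ and no structure of $F_i$ is used. If the $R_i$ are strictly larger than the corresponding sums, the origin is the unique minimizer. In the equality case there may be other minimizers, but $u=0$ remains a minimizer, which is all the statement claims.
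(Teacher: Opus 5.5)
Your proof is correct and takes essentially the same route as the paper. The paper's bound $|F_i(x_1,x_2)| \le |\alpha_{1,i}||u_1| + |\alpha_{2,i}||u_2| + |F_i(x_1,x_2)+\alpha_{1,i}u_1+\alpha_{2,i}u_2|$ is exactly your reverse triangle inequality; like you, the paper sums it over $i$, invokes the hypotheses on $R_1,R_2$, and observes equality at $u_1=u_2=0$.
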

\emph{Proof:}
First, the triangle inequality yields, $\forall i\in\{1,...,h\}, \forall x_1, x_2\in\Xcal, \forall u_1, u_2\in\R$
\begin{align*}
    &|F_i(x_1, x_2)| 
    \\& = |- (\alpha_{1,i} u_1 + \alpha_{2, i} u_2) + F_i(x_1, x_2) + \alpha_{1,i} u_1 + \alpha_{2, i} u_2| 
    \\& \leq |\alpha_{1, i}| |u_1 | + |\alpha_{2, i}||u_2| + |F_i(x_1, x_2) + \alpha_{1,i} u_1 + \alpha_{2, i} u_2|.
\end{align*}
Summing over $i\in\{1,...,h\}$, we obtain
\begin{align*}
    \sum_{i=1}^h |F_i(x_1, x_2)| 
    &\leq \sum_{i=1}^h \Big(|\alpha_{1, i}| |u_1 | + |\alpha_{2, i}||u_2| 
    \\& \qquad \quad + |F_i(x_1, x_2) + \alpha_{1,i} u_1 + \alpha_{2, i} u_2| \Big)
    \\&\leq R_1 |u_1| + R_2|u_2| 
    \\& \qquad \quad + \sum_{i=1}^h |F_i(x_1, x_2) + \alpha_{1,i} u_1 + \alpha_{2, i} u_2| ,
\end{align*}
where the second inequality holds by the assumption on $R_1, R_2$. Also, note that equality is attained for $u_1=u_2 = 0$. Since the inequality above holds for all $u_1, u_2 \in\R$, we conclude that $u_1 = u_2 = 0$ achieves the minimum in \eqref{eq:triangle}, for any $x_1, x_2\in\Xcal$. $\hfill\Box$

Now we can prove Proposition \ref{prop:closed_form}.
\begin{itemize}
    \item Proof of \ref{item:r_finite}). The quantities $c_k$ and $d_k$ evolve according to a linear dynamical system (cf. \eqref{eq:coeff_updates}), whose matrix has two eigenvalues equal to $a_1$, which belongs to $(-1, 1)$ by assumption. Hence, this system is exponentially stable. Observe from \eqref{eq:coeff_updates} that we have
    \begin{align}\label{eq:r_closed_form}
    \begin{split}
        & r_{1,k} = r_{1,0} + \sum_{l=0}^{k-1} |c_l|, 
        \\& r_{2,k} = r_{2,0} + \sum_{l=0}^{k-1} (|d_l| + |c_l|).
    \end{split}
    \end{align}
    
    \noindent 
    Since $c_k$ and $d_k$ decay exponentially to 0, $r_{i,\infty}$ are finite, $\forall i\in\{1,2\}$.

    \item Proof of \ref{item:value}). We proceed by induction, over $k\in\Z_{\geq1}$. The result is true for $k=1$, since we have $J_1(x) = |x_1| + |x_2|$.
    Then assume 
    $J_{k}(x) = \sum_{j=0}^{k-1} (|c_j x_1 + d_j x_2| + |c_j x_2|), \forall x\in\Xcal, \forall k\in\Z_{\geq 1}$
    is true. We have to prove that the relation also holds at iteration $k+1$. From the DP step \eqref{dp:iter_1}, we have
{
    \begin{align*}
        &J_{k+1}(x) 
        \\& \begin{aligned}[t]
            = \min_{u} \big\{ & |x_1| + |x_2| + R_1|u_1| + R_2|u_2| + J_k(Ax + Bu),
                    \\& \text{s.t.}\ Ax + Bu \in \Xcal\big\}.
        \end{aligned}
    \end{align*}
}

{Note that this one-step-ahead problem is feasible, since we assume $\Xcal$ to be invariant for the autonomous system, which implies that $u=0$ is a feasible solution. Now, since $Ax+Bu$ is constrained to be in $\Xcal$, we can leverage the induction hypothesis to expand the expression $J_k(Ax + Bu)$, which, according to the system dynamics \eqref{eq:lin_sys}, yields:}
\begin{align}
    & J_{k+1}(x) = |x_1| + |x_2| + \min_{u_1, u_2} \Big\{
    R_1|u_1| + R_2|u_2|  \nonumber
    \\& \quad + \sum_{j=0}^{k-1} |c_j (a_1 x_1 + a_2 x_2 + u_1) + d_j (a_1 x_2 + u_2)| \nonumber
    \\& \quad + \sum_{j=0}^{k-1} |c_j (a_1 x_2 + u_2)|, \ {\text{s.t.}\ Ax + Bu \in\Xcal} \Big\}. \label{eq:minimization}
\end{align}

    \noindent Now, {let us disregard the constraint $Ax + Bu \in\Xcal$, which will be checked a posteriori.} Note that $R_1 \geq r_{1,\infty} \geq \left(1 + \sum_{l=0}^{k} |c_l|\right)$ and $R_2 \geq r_{2,\infty} \geq \left(1 + \sum_{l=0}^{k} (|d_l| + |c_l|)\right)$ hold $\forall k\in\Z_{\geq0}$, where, in both cases, the first inequality is by the assumption on $R_i$ in Proposition \ref{prop:closed_form} item 2), and the second one by definition of $r_{i,\infty}, \forall i\in\{1,2\}$, and \eqref{eq:r_closed_form}. Then, we can apply Lemma \ref{lemma:aux} to the minimization {of the objective function in \eqref{eq:minimization}, since it} is of the type \eqref{eq:triangle} with $h=2k$, and, for $i=1,...,k$, set $F_i(x_1,x_2) = a_1 c_{i-1}x_1 + (a_2c_{i-1}+ a_1d_{i-1})x_2, {\alpha_{1,i}}=c_{i-1}, {\alpha_{2,i}}=d_{i-1}$; for $i=k+1,...,2k$, set $F_i(x_1,x_2) = a_1 c_{{i}-k-1}x_2, {\alpha_{1,i}}=0, {\alpha_{2,i}}=c_{i-k-1}$.
    Lemma \ref{lemma:aux} implies that $u_1=u_2=0$ is the minimizer in \eqref{eq:minimization}{; moreover, the state constraint $Ax + Bu = Ax \in \Xcal$ is satisfied since $\Xcal$ is assumed to be an invariant set for the autonomous system. Hence,} in view of the updates \eqref{eq:coeff_updates}, we have
    \begin{align*}
        &J_{k+1}(x) 
        \\& =  |x_1| + |x_2| 
        \\ & \quad + \sum_{j=0}^{k-1} \left(|c_j (a_1 x_1 + a_2 x_2) + d_j a_1 x_2| + |c_j a_1 x_2 |\right)
        \\& = {\sum_{j=0}^{k} \left(|c_j x_1 + d_j x_2| + |c_j x_2 |\right)}.
    \end{align*}
This completes the inductive step. $\hfill\Box$

\end{itemize}

\subsection{Explicit form of the value function}\label{app:explicit}

In this section, we give an explicit expression for the value function of the counterexample in Section \ref{sec:counterexample} to show analytically that it is not PWA. For simplicity, we consider $a_1 = a_2 = a>0$ and that $J_k (x) = P_k (x) + Q_k (x)$, where $P_k (x) = \sum_{j=0}^{k-1} a^j |x_1+ jx_2|$ and $Q_k (x) = \sum_{j=0}^{k-1} a^j |x_2|$, $\forall k\in\Z_{\geq1}$. 
Then, considering the region defined by $x_1 \geq 0$ and $x_2 <0$, we notice that 
$x_1+ jx_2 \geq 0 \Leftrightarrow j \leq \lfloor -\frac{x_1}{x_2} \rfloor$;
hence, by defining $h:=\min(k-1,  \lfloor -\frac{x_1}{x_2} \rfloor)$, we can write
\begin{align*}
    P_k(x) &= \sum_{j=0}^{h}a^j (x_1+ jx_2)+\sum_{j=h+1}^{k-1}a^j (-x_1- jx_2),
\end{align*}
where the second summation is empty whenever $h+1>k-1$. Then, we have:
\begin{align*}
&P_k(x)
\\& =2 \sum_{j=0}^{h}a^j (x_1+ jx_2) - \sum_{j=0}^{k-1}a^j (x_1+ jx_2)\\
    & = \frac{1-a^{h+1}}{1-a}\cdot 2x_1 + \frac{a-a^{h+2}+({h+1})a^{h+1}(a-1)}{(1-a)^2}\cdot 2 x_2\\
    & \;\;\;\;- \frac{1-a^k}{1-a}x_1 - \frac{a-a^{k+1}+ka^k(a-1)}{(1-a)^2}x_2,
\end{align*}

\noindent where we used known identities for geometric series (cf. \cite{apostol1991calculus}, Section 10.8). Note that $P_k$ has a finite number of pieces for any finite $k\in\Z_{\geq1}$, since $h$ and $k$ are always finite in this case.
Letting $k \to \infty$ yields
\begin{align*}
& P_\infty(x) 
\\& = \frac{1-2a^{h+1}}{1-a}x_1 + \frac{a-2a^{h+2}+2({h+1})a^{h+1}(a-1)}{(1-a)^2} x_2\\
& = \frac{1-2a^{h+1}}{1-a}x_1+\frac{a+2ha^{h+2}-2ha^{h+1}-2a^{h+1}}{1-a^2}x_2,\\
& \quad\quad \text{if}\; -hx_2\leq x_1<(-h+1)x_2,\;h=0,1,...\ .
\end{align*}

\noindent Note that for any $x$ such that $\frac{x_1}{x_2}\to -\infty$ (e.g., for $-x_2$ significantly smaller than $x_1$), we have that $h\to\infty$. 
Hence, the last equality indicates that $P_\infty$ is an affine function of $x$ over the polyhedra defined by $\{x\in \mathbb{R}^2 | -h x_2\leq x_1<(-h+1)x_2, \;x_1 \geq 0,\; \text{and}\; x_2 \leq0\}, h=0,1, ...$, with a different slope in each polyhedron, and the number of affine pieces grows unbounded as $\frac{x_1}{x_2}\to -\infty$.

\bibliographystyle{IEEEtran}
\bibliography{bibliography}

\end{document}